\theoremstyle{plain}
  \newtheorem{theorem}{Theorem}[section]
  \newtheorem{lemma}{Lemma}[section]
  \newtheorem{proposition}{Proposition}[section]
  \newtheorem{corollary}{Corollary}[section]
  \newtheorem{example}{Example}[section]
\theoremstyle{remark} 
  \newtheorem{definition}{Definition}[section]
\theoremstyle{definition}
\numberwithin{equation}{section}
\def\R{\mathbb R}
\def\K{\mathbb K}
\def\S{\mathbb S}
\def\L{\mathbb L}
\def\A{\mathbb A}
\newcommand{\p}{\partial}
\begin{document}

\title[On the uniqueness of $L_p$-Minkowski problems]{On the uniqueness of $L_p$-Minkowski problems: the constant $p$-curvature case  in $\R^3$}

\author{Yong Huang}
\address{Institute of Mathematics, Hunan University, Changsha, 410082, China.}
\email{huangyong@hnu.edu.cn}

\author{Jiakun Liu}
\address
	{Institute for Mathematics and its Applications, School of Mathematics and Applied Statistics,
	University of Wollongong,
	Wollongong, NSW 2522, AUSTRALIA.}
\email{jiakunl@uow.edu.au}

\author{Lu Xu}
\address{Institute of Mathematics, Hunan University, Changsha, 410082, China.}
\email{xulu@hnu.edu.cn}

\thanks{Research of Huang was supported by NSFC  No.11001261, Research of Liu was supported by the Australian Research Council DE140101366, Research of Xu was supported by NSFC No.11371360.}

\subjclass[2010]{Primary 35J96, 35B53; Secondary 53A05, 52A15.}

\date{\today}


\keywords{Minkowski problem, uniqueness, Monge-Amp\`ere equation}

\begin{abstract}
We study the $C^4$ smooth convex bodies $\K\subset\R^{n+1}$ satisfying $K(x)=u(x)^{1-p}$, where $x\in\S^n$, $K$ is the Gauss curvature of $\partial\K$, $u$ is the support function of $\K$, and $p$ is a constant.
In the case of $n=2$, either when $p\in[-1,0]$ or when $p\in(0,1)$ in addition to a pinching condition, we show that $\K$ must be the unit ball.
This partially answers a conjecture of Lutwak, Yang, and Zhang about the uniqueness of the $L_p$-Minkowski problem in $\R^3$.
Moreover, we give an explicit pinching constant depending only on $p$ when $p\in(0,1)$.
\end{abstract}

\maketitle

\baselineskip=16.4pt
\parskip=3pt

\section{Introduction}

The $L_p$-Minkowski problem introduced by Lutwak \cite{L2} is a generalisation of the classical Minkowski problem and has been intensively studied in recent decades. In the meantime, the classical Brunn-Minkowski theory has also been remarkably extended by Lutwak \cite{L2,L3} to the Brunn-Minkowski-Firey theory. Many interesting applications and inequalities have been correspondingly established following this pioneering development in convex geometry, see \cite{BLYZ,CLYZ,LYZs,LYZt,LYZo, Z, Z1,Z2} for example. Among many excellent references, we refer the reader to the newly expanded book \cite{S} of Schneider for a comprehensive introduction on the related topics.
Yet there are still plenty of unsolved problems in this research area. In particular, very little is known about the uniqueness of the $L_p$-Minkowski problem when $p<1$. Even in the case of $n=2$, the uniqueness is a very difficult and challenging problem, and has not been settled.
The aim of this paper is to establish the uniqueness in $\R^3$ for a range of $p$ less than $1$, which gives a partial answer to a conjecture of Lutwak, Yang, and Zhang about the uniqueness of the $L_p$-Minkowski problem.

Given a Borel measure $m$ on the unit sphere $\S^n$, the $L_p$-Minkowski problem investigates the existence of a unique convex body $\K$ in $\R^{n+1}$ such that $m$ is the $L_p$-surface area measure of $\K$, or equivalently
	\begin{equation}\label{Lp Min Prob}
		dm = u^{1-p} d\mu,
	\end{equation}
where $\mu$ is the ordinary surface area measure of $\K$ and $u : \S^n\to\R$ is the support function of $\K$.
Obviously, when $p=1$, the $L_p$-Minkowski problem reduces to the classical Minkowski problem.
We remark that when $p\neq1$, the Brunn-Minkowski-Firey theory is not a translation-invariant theory, and all convex bodies to which this theory is applied must have the origin in their interiors. Throughout this paper, we will always assume that the origin is contained inside the interior of $\K$, in other words, the support function $u>0$ is strictly positive on $\S^n$.
When $f=dm/dx$ is a positive continuous function on $\S^n$ and the boundary $\p\K$ is in a smooth category, for example $C^4$ smooth, \eqref{Lp Min Prob} can be described by the following Monge-Amp\`ere type equation:
	\begin{equation}\label{MA eq}
		\det\,(u_{ij}+u\delta_{ij})=f u^{p-1}  \ \mbox{ on }\S^n
	\end{equation}
where $u_{ij}$ is the covariant derivative of $u$ with respect to an orthonormal frame on $\S^n$.
The case of $p=1$ has been intensively studied and landmark contributions on regularity are due to Lewy \cite{Le}, Nirenberg \cite{N}, Calabi \cite{Cala}, Cheng-Yau \cite{CY}, Pogorelov \cite{Po}, and Caffarelli \cite{Ca} among many others, see \cite{S} for more history. For $p>1$, $p\neq n+1$, Lutwak \cite{L2} solved the problem \eqref{Lp Min Prob} when the given measure is even. Chou-Wang \cite{CW} solved \eqref{MA eq} for a general
measure when $p>1.$ Different proofs were presented in Hug-Lutwak-Yang-Zhang \cite{Hug} for $p>1.$ $C^\infty$ solution was given by Lutwak-Oliker \cite{LO} for the even case for $p>1.$ For the general case, $C^{2,\alpha}$ solution was given by Chou-Wang \cite{CW} and Guan-Lin \cite{GL} independently when $p \ge n+1.$ For $1<p<n+1,$ the origin may be on the boundary of the convex body of the solution
for a measure with positive smooth density, and thus the  $C^{2,\alpha}$ regularity is not desirable, see  \cite{CW, GL, Hug} for an example.
However, for $p>1,$ it was shown in Hug-Lutwak-Yang-Zhang \cite{Hug} that the origin is always in the interior of the polytope of the solution for the discrete case. The weak solution of \eqref{MA eq} for $-n-1<p<n+1$ was also established and partial regularities were obtained in \cite{CW}. For $p=0,$ named the logarithmic Minkowski problem \eqref{Lp Min Prob},  B\"or\"oczky-Lutwak-Yang-Zhang \cite{BLYZ} obtained the existence of the even logarithmic Minkowski problem provided that the given measure satisfied the subspace concentration condition. In the discrete case, Zhu \cite{Z} dropped the evenness assumption. Recently, Lu-Wang \cite{LW} established the existence of rotationally symmetric solutions of \eqref{MA eq} in the critical case $p=-n-1$, see also \cite{I, Z2}.
By adding a gradient condition on $f$, Huang-Lu \cite{HL} obtained the $C^\infty$ regularity of the solution of \eqref{MA eq} for $2<p<n+1$.

The focus of this paper is on the uniqueness of the $L_p$-Minkowski problem, namely the uniqueness of solution of equations \eqref{Lp Min Prob} and \eqref{MA eq}.
Recall that the tool used to establish uniqueness in the classical Minkowski problem is the Brunn-Minkowski inequality (among several equivalent forms Gardner \cite{Gar}): For any convex bodies $\K,\L\subset\R^{n+1}$ and $\lambda\in(0,1)$,
	\begin{equation}\label{B-M ineq}
		V((1-\lambda)\K+\lambda \L) \geq V(\K)^{1-\lambda} V(\L)^\lambda,
	\end{equation}
with equality if and only if $\K$ and $\L$ are translates, where $V(\cdot)$ is the volume and `$+$' is the Minkowski sum.
The uniqueness of the $L_p$-Minkowski problem for $p>1$ was obtained in \cite{L2} by using the Brunn-Minkowski-Firey inequality: For any convex bodies $\K,\L\subset\R^{n+1}$ containing the origin in their interiors and $\lambda\in(0,1)$,
	\begin{equation}\label{B-M-F ineq}
		V((1-\lambda)\circ\K+_p\lambda\circ\L) \geq V(\K)^{1-\lambda} V(\L)^\lambda,
	\end{equation}
with equality if and only if $\K=\L$, where `$+_p$' is the Firey $L_p$-sum and `$\circ$' is the Firey scalar multiplication (see Section 2 for the definitions).
However, the inequality \eqref{B-M-F ineq} does not hold when $p<1$ as shown in Example \ref{trouble}.
The lack of such an important ingredient causes the uniqueness a very difficult and challenging problem for the case of $p<1$.

Very recently, Jian-Lu-Wang \cite{JLW} proved that for any $-n-1<p<0$, there exists $f>0$, $\in C^\infty(\S^n)$ such that the equation \eqref{MA eq} admits two different solutions.
Hence, to study the uniqueness of the $L_p$-Minkowski problem for $p<1$, one needs to impose more conditions on the convex body $\K$ or on the function $f$.
In the case of $n=1$, for $0\leq p<1$, B\"or\"oczky-Lutwak-Yang-Zhang \cite{BLYZ1} obtained the analogous inequalities to \eqref{B-M-F ineq} for origin-symmetric convex bodies, which further implies the uniqueness under these assumptions. When $p=0$, the uniqueness was due to Gage \cite{G} within the class of origin-symmetric plane convex bodies that are also smooth and have positive curvature; while when the plane convex bodies are polytopes, the uniqueness was obtained by Stancu \cite{SA}.
As mentioned in \cite{BLYZ1}: ``For plane convex bodies that are not origin-symmetric, the uniqueness problem (when $0\leq p<1$) remains both open and important."

On the other hand, one can ask for the uniqueness when $f$ is a positive constant in \eqref{MA eq}. Note that when $p<1$, whether the solution convex body $\K$ is origin-symmetric appears to be an open problem Lutwak \cite{L2}, even for the special case of $n=2$ and $p=0$, which was conjectured by Firey \cite{F1}.
Concerning the uniqueness in the smooth category, but without the origin-symmetric assumption, the following conjecture has been posed by Lutwak, Yang, and Zhang.

\emph{A conjecture of Lutwak-Yang-Zhang:}
Let $\K$ be a $C^4$ smooth convex body in $\R^{n+1}$ containing the origin in its interior. Let $u$ be the support function of $\K$, and $K(x)$ be the Gauss-Kronecker curvature at the point of $\partial \K$ with the unit outer normal $x\in\S^n$.
When $-n-1 < p < 1$, if the $p$-curvature function of $\K$ is a positive constant, i.e.
\begin{equation}\label{conj}
\frac{u(x)^{1-p}}{K(x)}=C \quad \forall x\in\S^n,
\end{equation}
then $\K$ must be a ball.
In other words, if $u\in C^4(\S^n)$ is a positive solution of \eqref{MA eq} with $f=C$ a positive constant, then $u$ must be a constant on $\S^n$.
In that case, $u=C^{1/(n+1-p)}$ and $\K$ is a ball of radius $u$, centred at the origin.

The above index $-n-1$ is critical in the sense that when $p=-n-1$, the equation \eqref{MA eq} becomes invariant under all projective transformations on $\S^n$, and when $f$ is a positive constant, it is well known that all ellipsoids centred at the origin with the constant affine distance are solutions, see for example \cite{L,P,T}.

Without loss of generality, by a rescaling we may assume that the constant $C=1$ in equation \eqref{conj}.
Under some appropriate conditions, in the following we shall prove that $u=1$ is the unique solution of the Monge-Amp\`ere equation
	\begin{equation}\label{MA eq1}
		\det\,(u_{ij}+u\delta_{ij})=u^{p-1}  \ \mbox{ on }\S^2,
	\end{equation}
which correspondingly answers the conjecture of Lutwak-Yang-Zhang in $\R^3$.

\begin{theorem}\label{main thm}
The conjecture of Lutwak-Yang-Zhang holds true in $\R^3$ under either of the following two conditions:
 \begin{itemize}
  \item[(i)] $-1\leq p \leq 0$;
  \item[(ii)] $0<p<1$ and the boundary $\p\K$ satisfies a pinching relation that $\kappa_1\geq\kappa_2\geq\beta(p)\kappa_1$, where $\kappa_1,\kappa_2$ are two principal curvatures. In particular, the pinching constant is explicitly given by
  	\begin{equation}\label{pinch1}
		\beta(p) = 2\left(\frac{1-\sqrt{1-\sqrt{1-q^2}}}{\sqrt{1-q^2}}\right)-1,\quad\mbox{where }q=1-p.
	\end{equation}
 \end{itemize}
\end{theorem}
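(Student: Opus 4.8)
The plan is to prove that $\p\K$ is totally umbilic; this closes the problem at once. Indeed, if $W_{ij}:=u_{ij}+u\delta_{ij}=r\,\delta_{ij}$ for some function $r$ on $\S^2$, then $u_{ij}=(r-u)\delta_{ij}$, so the trace-free Hessian of $u$ vanishes and, by a classical fact, $u=a+\langle b,x\rangle$ is the restriction to $\S^2$ of an affine function of $\R^3$, for some $a\in\R$ and $b\in\R^3$. A direct computation then gives $W\equiv aI$, so \eqref{MA eq1} forces $(a+\langle b,x\rangle)^{p-1}\equiv a^2$; since $p\ne1$ this yields $b=0$, and then $a^2=a^{p-1}$, i.e.\ $u\equiv1$. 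So it suffices to show that $W$ is pointwise a scalar multiple of the identity.

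First I would fix the ingredients. Write $H:=\sigma_1(W)=\Delta u+2u$ for the sum of the principal radii; using \eqref{MA eq1} in the form $\sigma_2(W)=\det W=u^{p-1}$, the squared norm of the trace-free part $\mathring W$ of $W$ is
\[
\phi\ :=\ |\mathring W|^2\ =\ |W|^2-\tfrac12H^2\ =\ \tfrac12H^2-2u^{p-1}\ =\ \tfrac12(r_1-r_2)^2\ \ge\ 0 ,
\]
a $C^2$ function on $\S^2$ (as $u\in C^4$) that vanishes exactly at umbilic points; equivalently one may use the scale-invariant $u^{1-p}\phi=\tfrac12\bigl(r_1/r_2+r_2/r_1\bigr)-1$, perhaps more natural given the pinching hypothesis. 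Since $\K$ is a $C^4$ convex body with positive Gauss curvature, $W>0$ on $\S^2$, so $L:=W^{ij}\nabla_i\nabla_j$ (with $(W^{ij})=(W_{ij})^{-1}$), the linearization of $\log\det W$, is uniformly elliptic; and evaluating \eqref{MA eq1} at the extrema of $u$ (where $\nabla^2u\le0$, resp.\ $\ge0$) gives the crude $C^0$ bound $\min_{\S^2}u\le1\le\max_{\S^2}u$. The heart of the proof is a Simons-type differential inequality for $\phi$: differentiating $\log\det W=(p-1)\log u$ once and twice, commuting covariant derivatives on $\S^2$ (whose $+1$ curvature contributes, via the Ricci identity, a lower-order term $\sim H\phi$ with a helpful sign), and using the Codazzi symmetry of $\nabla W$ together with the divergence-free property and the adjugate identity for the cofactor matrix of $W$, one computes $LH$ and hence $L\phi=H\,(LH)+W^{ij}H_iH_j-2\,W^{ij}\nabla_i\nabla_j\bigl(u^{p-1}\bigr)$, which simplifies to a combination of $\phi$, $H$, $u$, $W^{ij}u_iu_j$, the factor $q:=1-p$, and terms quadratic in $\nabla W$. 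Evaluating at a maximum point $x_0$ of $\phi$, where $\nabla\phi=0$ forces $H\nabla H=2(p-1)u^{p-2}\nabla u$ (so $\nabla H$ is an explicit multiple of $\nabla u$) and $L\phi(x_0)\le0$, the estimate collapses to a pointwise relation of the schematic form $0\ge L\phi(x_0)\ge c\bigl(p;u(x_0),H(x_0)\bigr)\,\phi(x_0)-(\text{terms controlled by the hypotheses})$, from which we want $\phi(x_0)\le0$, hence $\phi\equiv0$.

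The sign bookkeeping in this last step is the main obstacle, and it is where the two cases diverge. For $-1\le p\le0$, i.e.\ $1\le q\le2$, the terms coming from the right-hand side $u^{p-1}$ carry the favorable sign (or are dominated by the curvature contribution $\sim H\phi$), so $c(p;\cdot,\cdot)>0$ with no extra assumption and the argument closes using only $\min u\le1\le\max u$; this is exactly where the hypothesis $p\ge-1$ (equivalently $q\le2$) enters. For $0<p<1$ some of those terms have the wrong sign; one then invokes the pinching $\kappa_2\ge\beta(p)\kappa_1$, equivalently $r_2/r_1\le1/\beta(p)$, to bound $\phi=\tfrac12(r_1-r_2)^2\le\tfrac12\bigl(\tfrac1{\beta(p)}-1\bigr)^2r_1^2\le\tfrac12\bigl(\tfrac1{\beta(p)}-1\bigr)^2u^{p-1}$, which caps the bad terms; requiring the resulting inequality to still force $\phi(x_0)\le0$ amounts to a pair of quadratic constraints on $\beta$ whose sharp solution is precisely the constant $\beta(p)$ of \eqref{pinch1} (consistently $\beta(0^{+})=0$, so case (ii) degenerates to case (i) as $p\downarrow0$, while $\beta(1^{-})=1$). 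In either case $\phi\equiv0$, so $\p\K$ is totally umbilic, and the reduction in the first paragraph gives $u\equiv1$, proving the conjecture of Lutwak--Yang--Zhang in $\R^3$ under hypothesis (i) or (ii).
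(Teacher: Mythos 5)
Your overall strategy --- run a maximum principle on a scalar quantity vanishing exactly at umbilic points, conclude it vanishes identically, then reduce to $u\equiv1$ via the classical characterization of functions on $\S^2$ with trace-free Hessian zero --- is the same as the paper's, and your reduction in the first paragraph is correct. The genuine gap is precisely where you write ``the estimate collapses to a pointwise relation of the schematic form $0\ge L\phi(x_0)\ge c\,\phi(x_0)-(\text{controlled terms})$'': that collapse is the entire content of the theorem and it is not automatic. After imposing $\nabla\phi(x_0)=0$ and $L\phi(x_0)\le0$, one is left with zeroth-order terms \emph{plus} a quadratic form in $\nabla W$ (coming from the concavity term $-F^{ij,rs}\nabla h_{ij}\nabla h_{rs}$, from $|\nabla K|^2/K$, and from the squares generated by differentiating $\phi$ twice), and the sign of that quadratic form depends delicately on which weight you attach to the pinching quantity. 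The paper's function is $(\lambda_1-\lambda_2)^2K^{\alpha}$ with a $p$-dependent exponent ($\alpha=2/q-2$ for $q\in[1,2)$, $\alpha=1/q-1$ for $q\in(0,1)$); your $\phi=\tfrac12(r_1-r_2)^2$ corresponds to $\alpha=-2$ and your ``scale-invariant'' alternative $u^{1-p}\phi$ to $\alpha=-1$. With $\alpha=-2$ the coefficient of $|\nabla_1h_{11}|^2$ obtained after substituting the first-order condition reduces to $16K^2-(4+4/q)H^2\lambda_2(\lambda_1-\lambda_2)$, which becomes negative once $\lambda_1/\lambda_2$ is moderately large; moreover the zeroth-order terms become $2KH-2q\,u^{q-1}$, which also has no sign. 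So the specific function you propose does not close case (i) without a pinching hypothesis, and no verification of the sign of the gradient terms is offered for any choice.

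Two further concrete problems. First, the endpoint $p=-1$ is degenerate: with the correct weight the only strictly positive zeroth-order term is $2q(1+\alpha)u^{q-1}=2(2-q)u^{q-1}$, which vanishes at $q=2$, so the pointwise contradiction disappears and the paper must run a separate strong-maximum-principle argument on $G=H/u-2$ (showing $Q$ is constant and then that $\nabla h\equiv0$); your sketch does not detect this and therefore does not cover $p=-1$. Second, in case (ii) the constant \eqref{pinch1} arises as the relevant root of the quartic $16\beta^2-(1-q^2)(1+\beta)^4=0$ governing the sign of the gradient coefficients for the weight $\alpha=1/q-1$; asserting that ``a pair of quadratic constraints'' yields exactly $\beta(p)$ without exhibiting those constraints leaves the theorem's quantitative content unproved. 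In short, the skeleton matches the paper, but the load-bearing computation is missing and the auxiliary function as chosen would make it fail.
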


As mentioned before, due to the lack of inequality \eqref{B-M-F ineq} in the case of $p<1$, one need different tools and new ideas to study the uniqueness of the $L_p$-Minkowski problem. In this paper, we shall work with the Monge-Amp\`ere type equation \eqref{MA eq1} and use a maximum principle argument to prove Theorem \ref{main thm}.

We remark that the Monge-Amp\`ere type equation \eqref{MA eq1} is related to the homothetic solutions of powered Gauss curvature flows
	\begin{equation}\label{Gauss flow}
		\frac{\partial X}{\partial t} = -K^\alpha \nu,
	\end{equation}
which has been studied by many people. In the case of $n=1$, a complete classification for the homothetic solutions of curve flows was given by Andrews \cite{A2}. In higher dimensions, the classification for the homothetic solutions remains an open question \cite{GN}.
One may consult Andrews \cite{A1,A1m,A2}, Urbas \cite{U1,U2}, and the references therein for related works in this direction.
Our proof of Theorem \ref{main thm} was in fact inspired by these works, and particularly the recent paper of Andrews-Chen \cite{AC}.

Theorem \ref{main thm} $(i)$ corresponds to the work of Andrews-Chen on the surface flows \eqref{Gauss flow} where $n=2$ and $\alpha\in[1/2,1]$. However, their proof depends on previous known results at the two end points $\alpha=1/2$ by Chow \cite{Ch} and $\alpha=1$ by Andrews \cite{A1}. In this paper we give a straightforward and self-contained proof to the uniqueness of \eqref{conj} and \eqref{MA eq1}.

Theorem \ref{main thm} $(ii)$ corresponds to the work of Chow \cite{Ch} that for $\alpha\geq1/n$, there exist constants $0\leq C(\alpha)\leq1/n$ depending continuously on $\alpha$ with $C(1/n)=0$ and $\lim_{\alpha\to\infty}C(\alpha)=1/n$ such that if the initial hypersurface satisfies $h_{ij}\geq C(\alpha)Hg_{ij}$ (where $g_{ij}, h_{ij}, H$ are the $1^{st}$, $2^{nd}$ fundamental forms and the mean curvature respectively), then by a rescaling the limit solution converges to a sphere. However, no explicit expression for such a pinching constant was given by Chow. In this paper, we derive the constant \eqref{pinch1} in the case of $n=2$, which matches Chow's asymptotic conditions by observing that
	$$ \alpha=\frac{1}{1-p}, \quad  C(\alpha)=\frac{\beta(p)}{1+\beta(p)}. $$
Moreover, when $n=2$, our result implies that Chow's pinching constant $C(\alpha)=0$ for all $\alpha\in[1/2,1]$ and $\lim_{\alpha\to\infty}C(\alpha)=1/2$.

The organisation of the paper is as follows. In Section 2, we recall some basic facts and notions in convex geometry and differential geometry, which will be used in our subsequent calculations. In Section 3, we give the proof of the main theorem, which is divided into three cases $p\in[-1,0)$, $p=0$, and $p\in(0,1)$. The first and last cases are proved via a unified formula derived by a maximum principle argument, while the case $p=0$ is due to the strong maximum principle.
Concerning the flow equation \eqref{Gauss flow}, Franzen \cite{F} recently pointed out that maximum-principle functions for any power $\alpha$ larger than one of the Gauss curvature does not exist, which makes it reasonable to assume the pinching condition in Theorem \ref{main thm} $(ii)$ for $0<p<1$.
For the remaining case that $-3<p<-1$, the current method does not work due to a technical obstruction, and we decide to treat it in a separate paper. The corresponding question in Gauss curvature flows \eqref{Gauss flow} that whether a closed strictly convex surface converges to a round point for powers $\frac14<\alpha<\frac12$ is still open. However, for powers $0<\alpha<\frac14$ the existence of non-spherical homothetic solutions of \eqref{Gauss flow} have been constructed by Andrews \cite{A1m}, which implies that there is no uniqueness for problems \eqref{conj} and \eqref{MA eq1} when $p<-3$. Last, $p=-3$ is the critical case that all ellipsoids centred at the origin with constant affine distance are solutions of \eqref{conj} and \eqref{MA eq1}.

\section{Preliminaries}

\subsection{Basics of convex geometry}
We briefly recall some notations and basic facts in convex geometry. For a comprehensive reference, the reader is referred to the book of Schneider \cite{S}.
A convex body $\K$ in $\R^{n+1}$ is a compact convex set that has a non-empty interior.
The support function $u_\K : \R^{n+1} \to \R$ associated with the convex body $\K$ is defined, for $x\in\R^{n+1}$, by
	\begin{equation}\label{supp}
		u_\K (x) = \max\{\langle x, y\rangle : y\in\K \},
	\end{equation}
where $\langle x, y\rangle $ is the standard inner product of the vectors $x,y\in\R^{n+1}$.
One can see that the support function is positively homogeneous of degree one and convex, thus it is completely determined by its value on the unit sphere $\S^n$.

It is well known that there is a one-to-one correspondence between the set of all convex bodies, $\mathcal{K}$, in $\R^{n+1}$ and the set
$\mathcal{S}$ whose members are the functions $u\in C(\S^n)$ such that $u$ is convex after being extended as a function of homogeneous degree one in $\R^{n+1}$.
If $u\in\mathcal{S}$, it can be shown that $u$ is the support function of a unique convex body $\K$ given by
	\begin{equation}\label{reco}
		\K = \bigcap_{x\in\S^n} \{y\in\R^{n+1} : \langle x, y\rangle \leq u(x)\}.
	\end{equation}

A basic concept in the classical Brunn-Minkowski theory is the Minkowski combination $\lambda \K+\lambda' \L$ of two convex bodies $\K, \L$ and two constants $\lambda, \lambda'>0$, given by an intersection of half-spaces,
	\begin{equation}\label{Mins}
		\lambda \K+\lambda' \L = \bigcap_{x\in\S^n} \{y\in\R^{n+1} : \langle x,y\rangle \leq \lambda u_\K(x)+\lambda' u_\L(x)\},
	\end{equation}
where $u_\K, u_\L$ are the support functions of $\K, \L$ respectively.
The combination \eqref{Mins} was generalised by Firey \cite{F2} to the $L_p$-combination for $p\geq1$,
	\begin{equation}\label{Firs}
		\lambda\circ \K+_p \lambda'\circ \L = \bigcap_{x\in\S^n} \{y\in\R^{n+1} : \langle x,y\rangle^p \leq \lambda u^p_\K(x)+\lambda' u^p_\L(x)\},
	\end{equation}
where $\circ$ is written for Firey scalar multiplication. From the homogeneity of \eqref{Firs}, one can see that the relationship between Firey and Minkowski scalar multiplications is $\lambda\circ\K=\lambda^{1/p}\K$.
The Firey $L_p$-combination \eqref{Firs} leads to the Brunn-Minkowski-Firey theory as developed later by Lutwak \cite{L2,L3}, which has found many applications, see for example, \cite{LYZp} and the references therein.

Note that the Brunn-Minkowski-Firey theory is not a translation-invariant theory, and applied to the set of convex bodies containing the origin in their interiors, $\mathcal{K}_0$, in $\R^{n+1}$. Correspondingly, we consider the set of support functions $\mathcal{S}_0=\mathcal{S}\cap\{u>0\mbox{ on }\S^n\}$.
Within these sets we can further extend the Firey $L_p$-combination \eqref{Firs} to the case of $p<1$ as follows.
Let $a, b>0$ and $0<\lambda<1$, define
	\begin{equation}\label{p-means}
		M_p(a,b,\lambda)=\left\{\begin{array}{ll}
		\left[(1-\lambda)a^p+\lambda b^p\right]^{1/p} & \mbox{ if }p\neq0,\\
		a^{1-\lambda}b^\lambda & \mbox{ if }p=0.
		\end{array}\right.
	\end{equation}
We also define $M_{-\infty}(a,b,\lambda)=\min\{a,b\}$, and $M_{\infty}(a,b,\lambda)=\max\{a,b\}$. These quantities and generalisations are called $p$th means or $p$-means \cite{HLP}. The arithmetic and geometric means correspond to $p=1$ and $p=0$, respectively. Moreover, if $-\infty\leq p<q\leq\infty$, then
	\begin{equation}\label{mean ineq}
		M_p(a,b,\lambda) \leq M_q(a,b,\lambda),
	\end{equation}
with equality if and only if $a=b$ (as $a,b>0$).

\begin{definition}\label{d101}
Let $\mathcal{K}_0$ be the set of convex bodies in $\R^{n+1}$ containing the origin in their interiors.
Let $\K, \L\in\mathcal{K}_0$ and $u_\K, u_\L$ be the support functions, respectively.  For any $p\in\R$, $\lambda\in(0,1)$, the generalised Firey $L_p$-combination $(1-\lambda) \circ \K+_p \lambda \circ \L$ is defined by
	\begin{equation}\label{gene p-comb}
		(1-\lambda) \circ \K+_p \lambda \circ \L = \bigcap_{x\in\S^n} \{y\in\R^{n+1} : \langle x,y \rangle \leq M_p(u_\K(x),u_\L(x),\lambda)\},
	\end{equation}
where $M_p$ is the function in \eqref{p-means}.
As an intersection of half-spaces, the combination \eqref{gene p-comb} gives a convex body in $\R^{n+1}$ for all $p\in\R$.
\end{definition}

By a rescaling, one can see that when $p=1$, \eqref{gene p-comb} is the Minkowski combination \eqref{Mins}, and when $p>1$, it is the Firey $L_p$-combination in \eqref{Firs}.
Note that when $p\geq1$, the convex body $(1-\lambda) \circ \K+_p \lambda \circ \L$ has exactly $M_p(u_\K(x),u_\L(x),\lambda)$ as its support function. However, when $p<1$, the convex body $(1-\lambda) \circ \K+_p \lambda \circ \L$ is the Wulff shape of the function $M_p(u_\K(x),u_\L(x),\lambda)$, which makes it very difficult to work with \cite{BLYZ1, S}.
In particular, the following example (as mentioned in \cite{BLYZ1}) shows that the important Brunn-Minkowski-Firey inequality \eqref{B-M-F ineq}, which was a crucial tool to establish the uniqueness for $p\geq1$, does not hold when $p<1$ in general.

\begin{example}\label{trouble}
Let $\A:=\{x\in\R^{n+1} : |x_i|\leq a\ \ \forall i=1,\cdots,n+1\}$, where $a>0$ is a constant.
Let $\A_\varepsilon:=\{x\in\R^{n+1} : |x_1-\varepsilon|\leq a,\  |x_j|\leq a\ \ \forall j=2,\cdots,n+1\}$, for a small positive constant $\varepsilon<a$. Then $\A, \A_\varepsilon\in\mathcal{K}_0$. Let $\lambda\in(0,1)$, from Definition \ref{d101}
	$$ (1-\lambda)\circ \A +_p \lambda\circ \A_\varepsilon = \left\{-M_p(a,a-\varepsilon,\lambda)\leq x_1 \leq M_p(a,a+\varepsilon,\lambda)\right\} \times \{|x_j|\leq a, \ j>1\}.$$
It is easy to see that $V((1-\lambda)\circ \A +_p \lambda\circ \A_\varepsilon)=(2a)^n\left(M_p(a,a-\varepsilon,\lambda)+M_p(a,a+\varepsilon,\lambda)\right)$ and $V(\A)=V(\A_\varepsilon)=(2a)^{n+1}$.
For $\lambda\in[0,1]$, define
	$$ h(\lambda) := M_p(a,a-\varepsilon,\lambda)+M_p(a,a+\varepsilon,\lambda),$$
where $M_p$ is in \eqref{p-means}.
Notice that $h$ is a smooth function in $\lambda$ and $h(0)=h(1)=2a$ for all $p\in\R$.
By differentiation, for $\lambda\in(0,1)$, $h''(\lambda)\leq0$ if $p\geq1$, while $h''(\lambda)>0$ if $p<1$.
So,
	\begin{eqnarray*}
		M_p(a,a-\varepsilon,\lambda)+M_p(a,a+\varepsilon,\lambda)  \geq  2a && \mbox{if } p\geq1,\\
		M_p(a,a-\varepsilon,\lambda)+M_p(a,a+\varepsilon,\lambda)  <  2a && \mbox{if } p<1.
	\end{eqnarray*}
This implies that, for $\lambda\in(0,1)$,
	\begin{eqnarray*}
		V((1-\lambda)\circ \A +_p \lambda\circ \A_\varepsilon) \geq  V(\A)^{1-\lambda}V(\A_\varepsilon)^{\lambda} && \mbox{if } p\geq1,\\
		V((1-\lambda)\circ \A +_p \lambda\circ \A_\varepsilon) <  V(\A)^{1-\lambda}V(\A_\varepsilon)^{\lambda}  && \mbox{if } p<1.
	\end{eqnarray*}
\end{example}

Following Definition \ref{d101}, the $L_p$-mixed volume $V_p(\K,\L)$ is defined by
	\begin{equation}\label{p-mix-vol}
		\frac{n+1}{p} V_p(\K,\L) = \lim_{\varepsilon\to0^+}\frac{V(\K+_p\varepsilon\circ\L)-V(\K)}{\varepsilon},
	\end{equation}
where $V(\K)$ is the volume of $\K$.
It was shown in \cite{L2} that for any $\K\in\mathcal{K}_0$, there exists a Borel measure $\mu_p(\K,\cdot)$ on $\S^n$ such that the $L_p$-mixed volume $V_p$ has the following integral representation:
	\begin{equation}\label{int-rep}
		V_p(\K,\L) = \frac{1}{n+1}\int_{\S^n} u_\L^p\,d\mu_p(\K,\cdot)
	\end{equation}
for all $\L\in\mathcal{K}_0$.
The measure $\mu_p$ is called the $L_p$-surface area measure of $\K$. When $p=1$, it reduces to the ordinary surface area measure $\mu$ for $\K$.
It turns out that $\mu_p$ is related to $\mu$ by \cite{L2}:
	\begin{equation}\label{abs contin}
		\frac{d\mu_p}{d\mu} = u^{1-p}.
	\end{equation}	

In the smooth category when $\p\K\in C^2$, $d\mu = K^{-1}dx$, where $K$ is the Gauss curvature of $\p\K$ and $dx$ is the spherical measure on $\S^n$.
In view of this, the conjecture of Lutwak-Yang-Zhang asking whether the ball is the unique convex body such that its $L_p$-surface area measure $d\mu_p$ equals to the spherical measure $dx$ on $\S^n$ is equivalent to
	\begin{equation}\label{Ku1}
		K= u^{1-p}\quad\mbox{on }\S^n.
	\end{equation}
Choosing an orthonormal frame on $\S^n$, \eqref{Ku1} can be written as
	\begin{equation}\label{Ku2}
		\det\,(u_{ij}+u\delta_{ij})=\frac1K=u^{p-1}\quad\mbox{on }\S^n.
	\end{equation}

\begin{proposition}
When $p\geq1$, the unit ball $u\equiv1$ is the unique solution of \eqref{Ku2}, (up to a translation if $p=1$).
\end{proposition}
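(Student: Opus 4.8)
The plan is to read the hypothesis through the mixed-volume machinery of this section and then apply the $L_p$ analogue of the Minkowski inequality, which is available precisely because $p\ge1$, being the first-order consequence of the Brunn--Minkowski--Firey inequality \eqref{B-M-F ineq}.

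First I would translate \eqref{Ku2} into a statement about the $L_p$-surface area measure of $\K$. If $u\in\mathcal{S}_0$ solves \eqref{Ku2}, then in the smooth category $d\mu(\K,\cdot)=K^{-1}\,dx=u^{p-1}\,dx$, so by \eqref{abs contin} we get $d\mu_p(\K,\cdot)=u^{1-p}\,d\mu(\K,\cdot)=dx$; that is, the $L_p$-surface area measure of $\K$ is exactly the spherical Lebesgue measure. Inserting this into \eqref{int-rep} gives $V_p(\K,\L)=\frac{1}{n+1}\int_{\S^n}u_\L^p\,dx$ for every $\L\in\mathcal{K}_0$. Specialising $\L=\K$ and using the standard identity $V_p(\K,\K)=V(\K)$ yields $V(\K)=\frac{1}{n+1}\int_{\S^n}u_\K^p\,dx$, while $\L=B$ (the unit ball) gives $V_p(\K,B)=\frac{1}{n+1}\abs{\S^n}=V(B)$. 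Since the unit ball satisfies $d\mu_p(B,\cdot)=dx$ as well, the same computation carried out with $B$ in place of $\K$ gives $V_p(B,\K)=\frac{1}{n+1}\int_{\S^n}u_\K^p\,dx=V(\K)$.

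Next I would invoke the $L_p$-Minkowski inequality for $p\ge1$ (obtained by differentiating \eqref{B-M-F ineq} at $\lambda=0^+$): for $\K,\L\in\mathcal{K}_0$ one has $V_p(\K,\L)^{n+1}\ge V(\K)^{n+1-p}V(\L)^p$, with equality if and only if $\K$ and $\L$ are dilates (homothetic when $p=1$). Applying this to the pair $(\K,B)$ and to the pair $(B,\K)$ and using the identities just derived gives the two opposite inequalities $V(B)^{n+1-p}\ge V(\K)^{n+1-p}$ and $V(\K)^{n+1-p}\ge V(B)^{n+1-p}$, hence $V(\K)^{n+1-p}=V(B)^{n+1-p}$. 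As long as $p\ne n+1$ this forces $V(\K)=V(B)$, and in addition equality must hold in both applications of the inequality. For $p>1$ the equality case says that $\K$ is a dilate of $B$, which together with $V(\K)=V(B)$ forces $\K=B$, i.e.\ $u\equiv1$; for $p=1$ it says that $\K$ is a homothet of $B$ of the same volume, i.e.\ $\K=B+z$ for some $z$ with $\abs{z}<1$ (so that $u>0$), giving $u\equiv1$ up to a translation.

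The step carrying the real content is the equality characterisation in the $L_p$-Minkowski inequality, which is exactly where rigidity enters: I would either cite it from \cite{L2} or reprove it by noting that equality holds along the curve $(1-\lambda)\circ\K+_p\lambda\circ B$ in \eqref{B-M-F ineq} only when $\K=B$. One caveat I would flag is that the index $p=n+1$ must be excluded, since there \eqref{Ku2} is invariant under $u\mapsto cu$ and every origin-centred sphere is a solution; this is reflected above by the vanishing of the exponent $n+1-p$, which renders the volume comparison vacuous.
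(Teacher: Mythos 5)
Your argument is correct, and it is a genuinely different proof from the one the paper writes out. The paper's first sentence gestures at the Brunn--Minkowski--Firey route, but the proof it actually records is analytical: it quotes Simon's integral-formula theorem that a smooth closed hypersurface with $S_k(\kappa)=G(u)$ for a $C^1$ function $G$ with $G'\le 0$ must be a round sphere, and applies it with $k=n$, $G(u)=u^{1-p}$ (plus a quick max/min comparison when $p>n+1$). You instead carry out the convex-geometric route in full: you observe that \eqref{Ku2} says exactly $d\mu_p(\K,\cdot)=dx$, compute $V_p(\K,B)=V(B)$ and $V_p(B,\K)=V(\K)$ from \eqref{int-rep}, and squeeze with the $L_p$-Minkowski inequality $V_p(\K,\L)^{n+1}\ge V(\K)^{n+1-p}V(\L)^p$ applied in both orders, finishing with its equality case. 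Both proofs are sound; yours has the advantage (noted but not exploited by the paper) of working in the non-smooth category and of isolating all the rigidity in one citable equality statement from \cite{L2}, while the paper's is self-contained within the differential-geometric framework of Section 2 and covers the more general curvature functions $S_k(\kappa)=G(u)$. Your caveat about $p=n+1$ is a genuine and worthwhile observation rather than a defect: at $p=n+1$ equation \eqref{Ku2} is homogeneous of degree $n$ on both sides, every origin-centred ball $u\equiv c$ solves it, and the proposition as literally stated is only true up to that dilation ambiguity; the paper's own proof via Simon also only concludes that $\K$ is a ball and silently needs the same normalisation $c^n=c^{p-1}\Rightarrow c=1$, which is vacuous precisely at $p=n+1$.
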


\begin{proof}
This can be proved by using the Brunn-Minkowski-Firey inequality \eqref{B-M-F ineq} in convex geometry, even in the non-smooth category \cite{L2}.
Here we reminisce some analytical results.
When $p>n+1$, consider the equation \eqref{Ku2} at the maximum $u_{max}$ and the minimum $u_{min}$, one immediately has $u\equiv1$.
In fact, Simon \cite{Si} proved that if $\p\K$ is smooth and satisfies
	\begin{equation}\label{Simon}
		S_k(\kappa) = G(u),
	\end{equation}
for a $C^1$ function $G$ with $G'\leq0$, where $\kappa=(\kappa_1,\cdots,\kappa_n)$ are the principal curvatures of $\p\K$, $S_k$ is the $k^{th}$ elementary symmetric function on $\R^n$, and $u>0$ is the support function of $\K$, then $\K$ must be a ball.
Therefore, the proposition follows as a special case of $k=n$, $G(u)=u^{1-p}$, which satisfies Simon's assumption $G'\leq0$ when $p\geq1$.
\end{proof}

\subsection{Basics of differential geometry}\label{sec2}
We choose a local orthonormal frame $\{e_1,...,e_{n+1}\}$ at the position vector $X \in \partial \K$ such that $e_1, e_2, \cdots, e_{n}$ are tangential to $\partial \K$ and $e_{n+1}=\nu$ is the unit outer normal of $\p\K$ at $X$.
Covariant differentiation on $\partial \K$ in the direction $e_i$ is denoted by  $\nabla_i$.
The metric and second fundamental form of $\partial \K$ is given by
$$g_{ij}=\langle e_i, e_j\rangle,    \quad  h_{ij}=\langle D_{e_i}\nu, e_j\rangle,$$
where $D$ denotes the usual connection of $\R^{n+1}$.
We list some well-known fundamental formulas for the hypersurface
$\partial \K\subset \R^{n+1}$, where repeated indices denote summation as the common convention.
\begin{align}
&& \nabla_j\nabla_i X & = -h_{ij}\nu  && \text{(Gauss formula)}\label{GF}\\
&& \nabla_i\nu & = h_{ij}\nabla_jX  && \text{(Weigarten equation)}\label{WE1}\\
&&  \nabla_l h_{ij} &= \nabla_j h_{il} &&\text{(Codazzi formula)}\label{Co}\\
&& R_{ijkl} &= h_{ik}h_{jl}-h_{il}h_{jk}  &&\text{(Gauss equation),}\label{Ga}
\end{align}
where $R_{ijkl}$ is the Riemannian curvature tensor.
We also have
\begin{equation}\label{j}
\begin{split}
\nabla_l\nabla_k h_{ij} &= \nabla_k\nabla_l h_{ij} + h_{mj}R_{imlk} + h_{im}R_{jmlk}\\
&=\nabla_j\nabla_i h_{kl} + (h_{mj}h_{il}- h_{ml}h_{ij})h_{mk} + (h_{mj}h_{kl}- h_{ml}h_{kj})h_{mi}.
\end{split}
\end{equation}

Let $u=\langle X, \nu\rangle$ be the support function of $\K$. Using the above formulas, we have some identities to be used in the next section.
\begin{lemma}\label{lemmn}
For any $i, j, l=1,\cdots, n$,
\begin{eqnarray}\label{uwg}
 &&\nabla_i u=h_{il}\langle\nabla_l X,~ X\rangle,\\
&&\nabla_j\nabla_iu=\langle \nabla
h_{ij},~ X\rangle +h_{ij}-uh_{il}h_{jl}\label{mf},
\end{eqnarray}
where $\nabla h_{ij} := \sum_k(\nabla_k h_{ij})\nabla_kX$.
\end{lemma}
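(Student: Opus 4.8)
The plan is to obtain both identities by straightforward differentiation, using only the Gauss and Weingarten equations \eqref{GF}, \eqref{WE1} together with the Codazzi formula \eqref{Co}. Since both claimed formulas are tensorial, I would fix an arbitrary point of $\p\K$ and choose the tangential frame $\{e_1,\dots,e_n\}$ to be geodesic normal there, so that all Christoffel symbols vanish at that point and covariant derivatives reduce to directional derivatives; it then suffices to verify the identities at such a point. I would also record the elementary fact that for the position vector one has $D_{e_i}X=e_i$, so $\nabla_iX=e_i$ is tangential and $\langle\nabla_iX,\nabla_jX\rangle=\delta_{ij}$.

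For \eqref{uwg}, I differentiate $u=\langle X,\nu\rangle$ in the direction $e_i$ and use the product rule to get $\nabla_iu=\langle\nabla_iX,\nu\rangle+\langle X,\nabla_i\nu\rangle$. The first term vanishes because $\nabla_iX=e_i$ is tangent while $\nu$ is the unit outer normal, and for the second term the Weingarten equation \eqref{WE1} gives $\nabla_i\nu=h_{il}\nabla_lX$, so $\nabla_iu=h_{il}\langle X,\nabla_lX\rangle$, which is the first identity.

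For \eqref{mf}, I apply $\nabla_j$ to \eqref{uwg}. At the chosen point this produces exactly three terms: $(\nabla_jh_{il})\langle X,\nabla_lX\rangle$, then $h_{il}\langle\nabla_j\nabla_lX,X\rangle$, and $h_{il}\langle\nabla_lX,\nabla_jX\rangle$. The last term is $h_{il}\delta_{lj}=h_{ij}$. The middle term is handled by the Gauss formula \eqref{GF}, $\nabla_j\nabla_lX=-h_{jl}\nu$, which gives $h_{il}\langle\nabla_j\nabla_lX,X\rangle=-h_{il}h_{jl}\langle\nu,X\rangle=-u\,h_{il}h_{jl}$. For the first term I use the Codazzi formula \eqref{Co} in the form $\nabla_jh_{il}=\nabla_lh_{ij}$, so that $(\nabla_jh_{il})\langle X,\nabla_lX\rangle=\sum_l(\nabla_lh_{ij})\langle\nabla_lX,X\rangle=\langle\nabla h_{ij},X\rangle$ by the very definition $\nabla h_{ij}=\sum_k(\nabla_kh_{ij})\nabla_kX$. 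Adding the three contributions yields $\nabla_j\nabla_iu=\langle\nabla h_{ij},X\rangle+h_{ij}-u\,h_{il}h_{jl}$, and since the point was arbitrary this proves \eqref{mf}.

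This is a routine computation, so I do not expect a genuine obstacle; the only point that demands care is the second differentiation, where one must correctly treat the covariant (not merely directional) derivative of the tensor $h_{il}$ and of the frame field itself — which is precisely what the geodesic-normal-frame reduction disposes of — and then apply Codazzi to symmetrize the $\nabla h$ term into the clean expression $\langle\nabla h_{ij},X\rangle$ rather than an unsymmetrized one.
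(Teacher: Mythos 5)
Your proof is correct and follows essentially the same route as the paper: both arguments differentiate $u=\langle X,\nu\rangle$ twice and combine the Weingarten equation, the Gauss formula, and the Codazzi symmetry $\nabla_j h_{il}=\nabla_l h_{ij}$ to produce the three terms of \eqref{mf}. The only cosmetic difference is that you differentiate the already-simplified expression $h_{il}\langle X,\nabla_l X\rangle$ in a geodesic normal frame, whereas the paper differentiates $\langle X,\nabla_i\nu\rangle$ and applies Weingarten afterwards; the computations are identical in substance.
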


\begin{proof}
Differentiating $u=\langle X, \nu\rangle$ we have
	\begin{equation*}
	\begin{split}
		\nabla_i u &=\langle X, \nabla_iv\rangle + \langle \nabla_iX, v\rangle \\
			&= \langle X, h_{il}\nabla_lX\rangle = h_{il}\langle\nabla_lX,X\rangle.
	\end{split}
	\end{equation*}
From \eqref{GF}--\eqref{Co} and a further differentiation, we have
	\begin{equation*}
	\begin{split}
		\nabla_j\nabla_i u &= \langle\nabla_iX, \nabla_jv\rangle + \langle X,\nabla_i\nabla_jv \rangle \\
			&= h_{il}\langle \nabla_lX,\nabla_jX \rangle + \langle X,(\nabla_jh_{ik})\nabla_kX \rangle + \langle X,h_{il}\nabla_j\nabla_lX \rangle \\
			&= h_{ij} + \langle \nabla h_{ij},X \rangle - \langle X,v\rangle h_{il}h_{jl},
	\end{split}
	\end{equation*}
and the proof is done.
\end{proof}

The principal curvatures $\lambda_1, \lambda_2, \cdots, \lambda_{n}$ of $\partial \K$ are defined by the eigenvalues of $[h_{ij}]$ with respect to the first fundamental form $[g_{ij}]$. The $k$-th elementary symmetric function of $\lambda_1, \lambda_2, \cdots, \lambda_{n},$
$$
S_k(\lambda_1, \lambda_2, \cdots, \lambda_{n})=\sum\limits_{1\leq i_1<\cdots<i_k\leq n}\lambda_{i_1}\cdots\lambda_{i_k},
$$
is called the $k$-th mean curvature of $\partial \K$.
In particular, when $k=1$ the mean curvature is $H=\lambda_1+\lambda_2+\cdots+\lambda_{n}$, and when $k=n$ the Gauss-Kronecker curvature is $K=\lambda_1 \lambda_2 \cdots\lambda_{n}$.
If $K=\lambda_1 \lambda_2 \cdots\lambda_{n}\neq 0,$ the reciprocals $\frac1{\lambda_1}, \frac1{\lambda_2}, \cdots, \frac1{\lambda_{n}}$ are called the radii of principal curvature. They are eigenvalues of $[u_{ij}+u\delta_{ij}]$ with respect to an orthonormal frame of $\S^{n}$, where $u_{ij}$ is the covariant derivative of $u$ on $\S^{n}$, and $u$ is the support function.
As mentioned in \eqref{Ku2}, we have the Gauss-Kronecker curvature
\begin{equation}
K=\frac{1}{\det\,(u_{ij}+u\delta_{ij})}.
\end{equation}

Define the operator $F(h_{ij}):=S_n(\lambda(h_{ij}))=\det h_{ij}$. Equation \eqref{Ku2} can be written as
	\begin{equation}\label{em}
		F(h_{ij})=K=u^q, \quad \text{with} \quad q:=1-p.
	\end{equation}
Denote
$$
F^{ij}=\frac{\partial F}{\partial h_{ij}} ,\quad F^{ij,rs}=\frac{\partial^2 F}{\partial h_{ij}\partial h_{rs}}.
$$
Using the above formulas \eqref{Co}--\eqref{j}, we have
\begin{equation} \label{12s}
\begin{split}
F^{ij}\nabla_m\nabla_m h_{ij} &= F^{ij}\nabla_m \nabla_i h_{mj} \\
&= F^{ij}\left[\nabla_i\nabla_m h_{mj}+R_{miml}h_{lj}+R_{mij l}h_{lm}\right] \\
&= F^{ij}\nabla_i\nabla_jh_{mm}+F^{ij}h_{li}h_{lj}h_{mm} -F^{ij}h_{ij}|A|^2,
 \end{split}
 \end{equation}
where $|A|^2:=\sum_{m,l}h_{ml}^2$.

\begin{lemma}
When the dimension $n=2$, we have the following:
\begin{eqnarray}\label{1qm}
F^{ij}\nabla_i\nabla_j H&=&-F^{ij, rs}\nabla_m h_{ij}\nabla_m h_{rs}+(1-q)KH^2+(2q-4)K^2\\
&&+q u^{q-1}\langle\nabla H, X\rangle+q u^{q-1}H+(1-\frac1q)\frac{|\nabla K|^2}{K},\nonumber\\
F^{ij}\nabla_i\nabla_j K&=&qu^{q-1}\langle\nabla K,  X\rangle+2qu^{q-1}K-qK^2H\label{t1}\\
&&+(1-\frac1q)\frac{F^{ij}\nabla_i K \nabla_j K}{K}\nonumber.
\end{eqnarray}
\end{lemma}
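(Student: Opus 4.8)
The plan is to obtain both formulas by differentiating the governing equation \eqref{em}, $F(h_{ij})=u^q$, once or twice and feeding the result into the commutation identity \eqref{12s} and the structural identities of Lemma \ref{lemmn}. Everything reduces, in the end, to three elementary facts special to $n=2$. First, since $F=\det$ on $2\times2$ matrices, $F^{ij}$ is the cofactor matrix, so in an orthonormal frame $F^{ij}=H\delta_{ij}-h_{ij}$; in particular $F^{ij}$ is positive definite and, by the Codazzi equation \eqref{Co}, divergence free, $\nabla_iF^{ij}=0$. Second, a direct computation (the first identity being also Euler's relation for the $2$-homogeneous function $F$) gives $F^{ij}h_{ij}=2K$ and $F^{ij}h_{il}h_{jl}=HK$, together with $|A|^2=H^2-2K$. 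Third, by the chain rule $\nabla_kK=\nabla_kF(h)=F^{ij}\nabla_kh_{ij}$, whence $F^{ij}\langle\nabla h_{ij},X\rangle=\langle\nabla K,X\rangle$ in the notation of Lemma \ref{lemmn}.

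I would prove \eqref{t1} first, directly from $K=u^q$ viewed as an identity of functions on $\partial\K$ via the Gauss map. Differentiating twice gives $\nabla_i\nabla_jK=q(q-1)u^{q-2}\nabla_iu\,\nabla_ju+qu^{q-1}\nabla_i\nabla_ju$. Contracting with $F^{ij}$ and substituting \eqref{mf} together with the three facts above turns the last term into $qu^{q-1}\langle\nabla K,X\rangle+2qu^{q-1}K-qK^2H$ (using $u^q=K$ and $F^{ij}\nabla_i\nabla_ju=\langle\nabla K,X\rangle+2K-uHK$). For the first term, the relation $\nabla_iK=qu^{q-1}\nabla_iu$ lets one trade $\nabla_iu\,\nabla_ju$ for $(qu^{q-1})^{-2}\nabla_iK\,\nabla_jK$, and after contraction with $F^{ij}$ and the substitution $u^q=K$ it collapses to $(1-\tfrac1q)F^{ij}\nabla_iK\,\nabla_jK/K$. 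Summing the two contributions gives \eqref{t1}.

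For \eqref{1qm} I would start from \eqref{12s}: inserting $F^{ij}h_{il}h_{jl}=HK$ and $F^{ij}h_{ij}=2K$ rewrites it as $F^{ij}\nabla_i\nabla_jH=F^{ij}\nabla_m\nabla_mh_{ij}-H^2K+2K|A|^2$. To evaluate $F^{ij}\nabla_m\nabla_mh_{ij}$ I apply the Laplacian to \eqref{em}: on one side $\Delta F(h)=F^{ij,rs}\nabla_mh_{ij}\nabla_mh_{rs}+F^{ij}\nabla_m\nabla_mh_{ij}$, on the other $\Delta(u^q)=q(q-1)u^{q-2}|\nabla u|^2+qu^{q-1}\Delta u$. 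Taking the trace in \eqref{mf} gives $\Delta u=\langle\nabla H,X\rangle+H-u|A|^2$, and $\nabla_iK=qu^{q-1}\nabla_iu$ turns $q(q-1)u^{q-2}|\nabla u|^2$ into $(1-\tfrac1q)|\nabla K|^2/K$. Substituting back, and simplifying the polynomial part via $|A|^2=H^2-2K$, $u^q=K$ — so that $(2-q)K|A|^2-H^2K=(1-q)KH^2+(2q-4)K^2$ — produces \eqref{1qm}.

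These computations are mechanical once the identities \eqref{12s} and Lemma \ref{lemmn} are in hand, and I do not expect a genuine obstacle, only a need for care. The two places most prone to a sign or coefficient slip are: (a) reducing the curvature terms of the commutation identity \eqref{12s} to expressions in $H$ and $K$ alone, which is exactly where the Gauss equation \eqref{Ga} and the $n=2$ identities $F^{ij}h_{il}h_{jl}=HK$, $|A|^2=H^2-2K$ enter; and (b) consistently exploiting the two a priori distinct but equal representations of the gradient of the right-hand side, namely $\nabla_iK=qu^{q-1}\nabla_iu=F^{ij}\nabla_ih_{jk}\cdots$ (more precisely $\nabla_kK=F^{ij}\nabla_kh_{ij}$), to move between $|\nabla u|^2$, $|\nabla K|^2$ and $F^{ij}\nabla_iK\,\nabla_jK$. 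An error in either would propagate into the final coefficients, so I would verify the polynomial identities in $H,K$ at the very end.
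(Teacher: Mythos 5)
Your proposal is correct and follows essentially the same route as the paper: differentiate $F(h_{ij})=u^q$ once and twice, feed the result into the commutation identity \eqref{12s} and the identities of Lemma \ref{lemmn}, and close the computation with the $n=2$ facts $F^{ij}h_{ij}=2K$, $F^{ij}h_{il}h_{jl}=HK$, $|A|^2=H^2-2K$, and $\nabla_iK=qu^{q-1}\nabla_iu$. The polynomial simplification $(2-q)K|A|^2-H^2K=(1-q)KH^2+(2q-4)K^2$ and the conversion of $q(q-1)u^{q-2}|\nabla u|^2$ into $(1-\tfrac1q)|\nabla K|^2/K$ both check out, so the argument is complete.
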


\begin{proof}
Differentiating equation \eqref{em} with respect to $e_m$ twice yields
	\begin{equation}\label{w1}
		F^{ij}\nabla_mh_{ij}=\nabla_mK= qu^{q-1}\nabla_m u,
	\end{equation}
	\begin{equation}\label{w2}
	\begin{split}
		F^{ij}\nabla_m\nabla_mh_{ij}+F^{ij,rs}\nabla_mh_{ij}\nabla_mh_{rs} &= \Delta K\\
		&=qu^{q-1}\Delta u +q(q-1)u^{q-2}|\nabla u|^2.
	\end{split}
	\end{equation}
Using  \eqref{12s} and \eqref{w2}, we can get
	\begin{equation}
	\begin{split}
		F^{ij}\nabla_i\nabla_j H=&-F^{ij, rs}\nabla_m h_{ij}\nabla_m h_{rs}+F^{ij}h_{ij}|A|^2-F^{ij}h_{im}h_{jm}H\\
		&+ qu^{q-1}\Delta u +q(q-1)u^{q-2}|\nabla u|^2.
	\end{split}
	\end{equation}
In fact, without loss of generality we may assume that $h_{ij}$ is diagonal at $X$.
By \eqref{mf} in Lemma~\ref{lemmn} and \eqref{w1},
\begin{equation}
\begin{split}
F^{ij}\nabla_i\nabla_j H=&-F^{ij, rs}\nabla_m h_{ij}\nabla_m h_{rs}+F^{ij}h_{ij}|A|^2-F^{ij}h_{im}h_{jm}H\\
&+q u^{q-1}\left[\langle\nabla H,  X\rangle+H-u|A|^2+(q-1)\frac{|\nabla u|^2}{u}\right]\\
=&-F^{ij, rs}\nabla_m h_{ij}\nabla_m h_{rs}+2K(H^2-2K)-H^2K\\
&+q u^{q-1}\left[\langle\nabla H, X\rangle+H-u(H^2-2K)+(q-1)\frac{|\nabla u|^2}{u}\right]\\
=&-F^{ij, rs}\nabla_m h_{ij}\nabla_m h_{rs}+(1-q)KH^2+(2q-4)K^2\\
&+q u^{q-1}\langle\nabla H, X\rangle+q u^{q-1}H+(1-\frac1q)\frac{|\nabla K|^2}{K},
\end{split}
 \end{equation}
where we have used the 2-homogeneity of $F=K$.

On the other hand, by equations \eqref{mf}, \eqref{em}, and \eqref{w1} one can obtain
\begin{equation}
\begin{split}
F^{ij}\nabla_i\nabla_j K&=q(q-1)u^{q-2}F^{ij}\nabla_iu\nabla_ju+qu^{q-1}F^{ij}\nabla_i\nabla_ju\\
&=qu^{q-1}\left[\langle\nabla K,  X\rangle+2K-uF^{ij}h_{im}h_{mj}+(q-1)\frac{F^{ij}\nabla_iu\nabla_ju}{u}\right]\\
&=qu^{q-1} \langle\nabla K,  X\rangle+2qu^{q-1}K-qK^2H+(1-\frac1q)\frac{F^{ij}\nabla_i K \nabla_j K}{K}.
\end{split}
 \end{equation}
\end{proof}

\section{Proof of Theorem~\ref{main thm}}
In this section, we prove Theorem~\ref{main thm} by using a maximum principle argument inspired by Andrews and Chen's work \cite{AC} on the powered Gauss curvature flow. We first derive a unified stopover inequality \eqref{stop}, and then divide the proof of Theorem \ref{main thm} into three cases, in three subsections, respectively. Throughout this section, we assume the dimension $n=2$.

Define the auxiliary function
	\begin{equation}\label{aux funct}
		Q=(\lambda_1-\lambda_2)^2K^\alpha=(H^2-4K)K^\alpha,
	\end{equation}
for a constant $\alpha$ to be determined, where $\lambda_1, \lambda_2$ are two principal curvatures of $\partial \K$; $H$ and $K$ are the mean and Gaussian curvatures, respectively.
Assume that $Q$ attains its positive maximum value at $\hat X\in\partial \K$. By continuity, $Q>0$ in a small neighbourhood of $\hat X$.
Choose an orthonormal frame such that $e_1, e_2$ are tangential to $\partial \K$ and the matrix $[h_{ij}]$ is diagonal at $\hat X$.
By differentiation, we have at $\hat X$
	\begin{equation}\label{g}
		0=\nabla_i (\log Q)=\frac{2H\nabla_i H-4\nabla_i K}{H^2-4K}+\alpha\frac{\nabla_i K}{K},
	\end{equation}
and
	\begin{equation} \label{s}
	\begin{split}
		0 \geq F^{ij}\nabla_i\nabla_j (\log Q) =&\ \frac{F^{ij}(2H\nabla_i\nabla_j H-4\nabla_i\nabla_j K)}{H^2-4K}+\alpha\frac{F^{ij} \nabla_i\nabla_j K}{K}\\
 		&+\frac{2F^{ij}\nabla_iH\nabla_j H}{H^2-4K}-(\alpha^2+\alpha)\frac{F^{ij}\nabla_iK\nabla_j K}{K^2}.
	\end{split}
	\end{equation}

Our subsequent plan is to show, however, that $F^{ij}\nabla_i\nabla_j (\log Q)>0$ by some deliberate choice of $\alpha$ in \eqref{aux funct}.
This contradiction will imply that
	\begin{equation}\label{contra}
		(\lambda_1-\lambda_2)^2K^\alpha=0\quad \mbox{on }\partial \K,
	\end{equation}
namely $\lambda_1 \equiv \lambda_2$, and therefore, the convex body $\K$ must be a ball.

Combining \eqref{1qm} and \eqref{t1} into \eqref{s}, we have
\begin{equation}
\begin{split}
0\geq &\ (2-2q-\alpha q)KH + 2q(1+\alpha)u^{q-1} - \frac{2H}{H^2-4K} F^{ij, rs}\nabla_m h_{ij}\nabla_m h_{rs} \\
&+\frac{2H}{H^2-4K}\left[q u^{q-1}\langle\nabla H,  X\rangle+(1-\frac1q)\frac{|\nabla K|^2}{K}\right] \\
&+\left(\frac{\alpha}{K}-\frac{4}{H^2-4K}\right)\left[qu^{q-1} \langle\nabla K,  X\rangle+(1-\frac1q)\frac{F^{ij}\nabla_i K\nabla_j K}{K}\right] \\
 &+\frac{2F^{ij}\nabla_iH\nabla_j H}{H^2-4K} - (\alpha^2+\alpha)\frac{F^{ij}\nabla_iK\nabla_j K}{K^2}.
 \end{split}
 \end{equation}
From \eqref{g},
	\begin{equation}\label{mjh}
		\frac{2H}{H^2-4K}\langle\nabla H,  X\rangle+\left(\frac{\alpha}{K}-\frac{4}{H^2-4K}\right)\langle\nabla K, X\rangle=0.
	\end{equation}
Hence, we obtain
	\begin{equation} \label{sc}
		0\geq (2-2q-\alpha q)KH + 2q(1+\alpha)u^{q-1} + L,
	\end{equation}
where $L$ are the remaining derivative terms given by
\begin{equation} \label{scs}
\begin{split}
L:=&-\frac{2H}{H^2-4K} F^{ij, rs}\nabla_m h_{ij}\nabla_m h_{rs}+\frac{2H}{H^2-4K} (1-\frac1q)\frac{|\nabla K|^2}{K}\\
&+\left(\frac{\alpha}{K}-\frac{4}{H^2-4K}\right) (1-\frac1q)\frac{F^{ij}\nabla_i K\nabla_j K}{K} \\
 &+\frac{2F^{ij}\nabla_iH\nabla_j H}{H^2-4K}-(\alpha^2+\alpha)\frac{F^{ij}\nabla_iK\nabla_j K}{K^2}.
 \end{split}
 \end{equation}

Now, let's first estimate these derivative terms in $L$.
Notice that the matrix $[h_{ij}]$ is diagonal at $\hat X$. As $\lambda_1=h_{11}, \lambda_2=h_{22}$, we have
	\begin{eqnarray}\label{3008}
		\nabla_iK &=& \lambda_2\nabla_i h_{11}+\lambda_1\nabla_i h_{22},\\
 		F^{ij, rs}\nabla_m h_{ij}\nabla_m h_{rs} &=& 2(\nabla_m h_{11}\nabla_m h_{22}-|\nabla_m h_{12}|^2). \nonumber
	\end{eqnarray}
Combining \eqref{3008} into \eqref{scs},
\begin{equation} \label{qws}
\begin{split}
L=&\ -\frac{4H}{H^2-4K} (\nabla_m h_{11}\nabla_m h_{22}-|\nabla_m h_{12}|^2) \\
 &+\frac{2H}{H^2-4K} (1-\frac1q)\frac{(\lambda_2\nabla_1 h_{11}+\lambda_1\nabla_1 h_{22})^2+(\lambda_2\nabla_2 h_{11}+\lambda_1\nabla_2 h_{22})^2}{K} \\
 &+\left[\left( \alpha -\frac{4K}{H^2-4K}\right) (1-\frac1q)-(\alpha^2+\alpha)\right]\frac{\lambda_2|\nabla_1 K|^2+\lambda_1|\nabla_2 K|^2}{K^2} \\
 &+\frac{2\lambda_2|\nabla_1H|^2+2\lambda_1 |\nabla_2 H|^2}{H^2-4K}.
 \end{split}
 \end{equation}

We see from \eqref{g} and \eqref{scs} that
	\[ [2(\lambda_1-\lambda_2)K+\alpha\lambda_2(H^2-4K)]\nabla_ih_{11}=[2(\lambda_1-\lambda_2)K-\alpha\lambda_1(H^2-4K)]\nabla_ih_{22}. \]
Since $\lambda_1\neq \lambda_2$ at $\hat X$, we have
	\[ [2K+\alpha\lambda_2(\lambda_1-\lambda_2)]\nabla_ih_{11}=[2 K-\alpha\lambda_1(\lambda_1-\lambda_2)]\nabla_ih_{22}. \]
Denote
	\begin{equation}\label{gx}
		\nabla_ih_{11} = t \nabla_ih_{22},\quad\mbox{where }\ t:=\frac{2 K-\alpha\lambda_1(\lambda_1-\lambda_2)}{2K+\alpha\lambda_2(\lambda_1-\lambda_2)}.
	\end{equation}
At this stage we assume that $t\neq 0$ is well defined, but postpone the verification of this assumption in each subsequent proof.

From \eqref{Co}, \eqref{qws}, and \eqref{gx}, we then obtain
\begin{equation*}
\begin{split}
L=&\ -\frac{4H}{H^2-4K}\left[ (\frac{1}{t}-\frac{1}{t^2})|\nabla_1 h_{11}|^2+(t-t^2)|\nabla_2 h_{22}|^2\right]\\
&+\frac{2H}{H^2-4K} (1-\frac1q)\frac{(\lambda_2+\frac{\lambda_1}{t})^2 |\nabla_1 h_{11}|^2+(t\lambda_2+\lambda_1)^2 |\nabla_2 h_{22}|^2}{K}\\
&+\left[-\frac{4K}{H^2-4K}(1-\frac1q)-\alpha^2-\frac{\alpha}q\right]\frac{\lambda_2(\lambda_2+\frac{\lambda_1}{t})^2|\nabla_1 h_{11}|^2+\lambda_1(\lambda_2t+\lambda_1)^2|\nabla_2 h_{22}|^2}{K^2} \\
 &+\frac{2[\lambda_2(1+\frac 1 t)^2|\nabla_1h_{11}|^2+\lambda_1(t+1)^2 |\nabla_2 h_{22}|^2]}{H^2-4K}\\
 =:&\ L_1 |\nabla_1 h_{11}|^2+L_2 |\nabla_2 h_{22}|^2,
 \end{split}
 \end{equation*}
where the coefficients $L_1$ and $L_2$ are respectively,
 \begin{equation}\label{L1}
\begin{split}
L_1=&\ -\frac{4H}{H^2-4K} (\frac{1}{t}-\frac{1}{t^2}) +\frac{2H}{H^2-4K} (1-\frac1q)\frac{(\lambda_2+\frac{\lambda_1}{t})^2}{K}\\
&+\left[-\frac{4K}{H^2-4K}(1-\frac1q)-\alpha^2-\frac{\alpha}q\right]\frac{\lambda_2(\lambda_2+\frac{\lambda_1}{t})^2}{K^2} \\
 &+\frac{2\lambda_2(1+\frac 1 t)^2}{H^2-4K}
 \end{split}
 \end{equation}
and
\begin{equation}\label{L2}
\begin{split}
L_2=&\ -\frac{4H}{H^2-4K} (t-t^2) +\frac{2H}{H^2-4K} (1-\frac1q)\frac{ (t\lambda_2+\lambda_1)^2 }{K}\\
&+\left[-\frac{4K}{H^2-4K}(1-\frac1q)-\alpha^2-\frac{\alpha}q\right]\frac{ \lambda_1(\lambda_2t+\lambda_1)^2}{K^2} \\
 &+\frac{2 \lambda_1(t+1)^2  }{H^2-4K}.
 \end{split}
 \end{equation}

In order to estimate $L_1$ and $L_2$, we need some further simplifications.
In fact, by the definition of $t$ in \eqref{gx}, we can eliminate $t$ from $L_1$ as follows.
\begin{eqnarray}
\frac{1}{t}-\frac{1}{t^2} &=& \frac{-\alpha(\lambda_1-\lambda_2)H[2K+\alpha\lambda_2(\lambda_1-\lambda_2)]}{[2K-\alpha\lambda_1(\lambda_1-\lambda_2)]^2},\label{q313}\\
(\lambda_2+\frac{\lambda_1}{t})^2 &=& \frac{4K^2H^2}{[2K-\alpha\lambda_1(\lambda_1-\lambda_2)]^2},\label{q314}\\
(1+\frac{1}{t})^2 &=& \frac{[4K-\alpha(\lambda_1-\lambda_2)^2]^2}{[2K-\alpha\lambda_1(\lambda_1-\lambda_2)]^2}.\label{q315}
\end{eqnarray}
Therefore,
\begin{equation}
\begin{split}
L_1=&\ \frac{4H}{H^2-4K}\frac{\alpha(\lambda_1-\lambda_2)H[2K+\alpha\lambda_2(\lambda_1-\lambda_2)]}{[2K-\alpha\lambda_1(\lambda_1-\lambda_2)]^2} \\ &+\frac{2H}{H^2-4K} (1-\frac1q)\frac{4KH^2}{[2K-\alpha\lambda_1(\lambda_1-\lambda_2)]^2}\\
&+\left[-\frac{4K}{H^2-4K}(1-\frac1q)-\alpha^2-\frac{\alpha}q\right] \lambda_2\frac{4H^2}{[2K-\alpha\lambda_1(\lambda_1-\lambda_2)]^2} \\
 &+\frac{2\lambda_2}{H^2-4K}\frac{[4K-\alpha(\lambda_1-\lambda_2)^2]^2}{[2K-\alpha\lambda_1(\lambda_1-\lambda_2)]^2},
 \end{split}
 \end{equation}
and a further simplification gives
\begin{equation*}
\begin{split}
L_1 = \frac{2\lambda_2}{(H^2\!-\!4K)[2K\!-\!\alpha\lambda_1(\lambda_1\!-\!\lambda_2)]^2} & \left\{ 4H^3 (1-\frac1q)\lambda_1-8(1-\frac1q)K H^2\right. \\
&+ 2\alpha H^2[2\lambda_1^2-2K+\alpha H^2-4\alpha K] \\
&\left.-2(\frac{\alpha}q+\alpha^2)  H^2(H^2-4K)+[4 (1+\alpha)K-\alpha H^2]^2\right\}.
 \end{split}
 \end{equation*}

Denote the combination in curly brackets by $B_1:=\frac{(H^2-4K)[2K-\alpha\lambda_1(\lambda_1-\lambda_2)]^2}{2\lambda_2} L_1$.
For the sake of the subsequent analysis, let us compute $B_1$ in the following manner,
 \begin{equation*}
\begin{split}
B_1 =&\ 2\alpha H^2[2\lambda_1^2-2K+\alpha H^2-4\alpha K]+ 4H^3 (1-\frac1q)\lambda_1-8(1-\frac1q)K H^2\\
  & -2(\frac{\alpha}q+\alpha^2)  H^2(H^2-4K)+[4 (1+\alpha)K-\alpha H^2]^2 \\
=& 16(1+\alpha)^2K^2+H^2\left\{4\alpha \lambda_1^2-4\alpha K+2\alpha^2H^2-8\alpha^2K+4(1-\frac 1q)H\lambda_1\right.\\
  & \left.-8(1-\frac 1q)K-2(\frac{\alpha}{q}+\alpha^2)H^2+8(\frac{\alpha}{q}+\alpha^2)K-8\alpha(1+\alpha)K+\alpha^2H^2\right\}\\
=& 16(1+\alpha)^2K^2+H^2\left\{4\alpha \lambda_1^2+2\alpha^2H^2+4(1-\frac 1q)\lambda_1^2-2(\frac{\alpha}{q}+\alpha^2)H^2+\alpha^2H^2\right.\\
  & +\left.K\left[-8(1-\frac 1q) -8\alpha^2 -4\alpha  +4(1-\frac 1q) +8(\frac{\alpha}{q}+\alpha^2) -8\alpha(1+\alpha) \right]\right\},
  \end{split}
 \end{equation*}
and thus
	\begin{equation}\label{B1}
	\begin{split}
		B_1 = 16(1+\alpha)^2K^2+H^2 & \left\{\left[4(1+\alpha-\frac 1q)+\alpha(\alpha-\frac 2 q)\right]\lambda_1^2+\alpha(\alpha-\frac{2}{q})\lambda^2_2+2(\alpha^2-\frac{2\alpha}{q})K\right.\\
  & +\left.K\left[-4(1-\frac 1q) -8\alpha^2 -12\alpha +\frac{8 \alpha}{q} \right]\right\}. \\
  	= 16(1+\alpha)^2K^2+H^2 & \left[\alpha(\alpha-\frac2q)H^2+4(1+\alpha-\frac1q)\lambda_1^2-4(1+2\alpha)(1+\alpha-\frac1q)K\right].
	\end{split}
	\end{equation}

Regarding \eqref{L2}, we can simplify $L_2$ similarly as above. From \eqref{gx}, analogous to \eqref{q313}--\eqref{q315} we have
\begin{eqnarray}
t-t^2 &=& \frac{\alpha(\lambda_1-\lambda_2)H[2K-\alpha\lambda_1(\lambda_1-\lambda_2)]}{[2K+\alpha\lambda_2(\lambda_1-\lambda_2)]^2}, \label{q317} \\
(\lambda_2t+\lambda_1)^2 &=& \frac{4K^2H^2}{[2K+\alpha\lambda_2(\lambda_1-\lambda_2)]^2}, \label{q318} \\
(1+t)^2 &=& \frac{[4K-\alpha(\lambda_1-\lambda_2)^2]^2}{[2K+\alpha\lambda_2(\lambda_1-\lambda_2)]^2}. \label{q319}
\end{eqnarray}
Noting the symmetry of $\lambda_1$ and $\lambda_2$ in the simplification, we then obtain
	\begin{equation*}
		L_2 =: \frac{2\lambda_1}{(H^2-4K)[2K+\alpha\lambda_2(\lambda_1-\lambda_2)]^2} B_2,
	\end{equation*}
where
\begin{equation}\label{B2}
\begin{split}
B_2=16(1+\alpha)^2K^2+H^2 & \left\{\alpha(\alpha-\frac{2}{q})\lambda_1^2+\left[4(1+\alpha-\frac 1q)+\alpha(\alpha-\frac 2 q)\right]\lambda^2_2+2(\alpha^2-\frac{2\alpha}{q})K\right.\\
  & +\left.K\left[-4(1-\frac 1q) -8\alpha^2 -12\alpha +\frac{8 \alpha}{q} \right]\right\} \\
  = 16(1+\alpha)^2K^2+H^2 & \left[\alpha(\alpha-\frac2q)H^2+4(1+\alpha-\frac1q)\lambda_2^2-4(1+2\alpha)(1+\alpha-\frac1q)K\right].
 \end{split}
 \end{equation}

\vspace{5pt}
\noindent\textbf{\emph{A stopover:}}
Finally, returning to \eqref{sc} and \eqref{scs} we have the inequality
	\begin{equation}\label{stop}
	\begin{split}
		0 \geq&\ (2-2q-\alpha q)KH + 2q(1+\alpha)u^{q-1} \\
		& + \frac{2\lambda_2|\nabla_1h_{11}|^2}{(H^2-4K)[2K-\alpha\lambda_1(\lambda_1-\lambda_2)]^2}B_1 + \frac{2\lambda_1 |\nabla_2h_{22}|^2}{(H^2-4K)[2K+\alpha\lambda_2(\lambda_1-\lambda_2)]^2}B_2,
	\end{split}
	\end{equation}
where $B_1, B_2$ are in \eqref{B1} and \eqref{B2}, respectively.

In the following proofs, by some deliberate choice of $\alpha$ we will show that the right hand side of inequality \eqref{stop} is positive and thus obtain \eqref{contra} by contradiction.

\subsection{Case I, $p\in (-1,0]$}
Equivalently, one has $1\leq q=1-p < 2$.
Choosing
	\begin{equation}\label{alf1}
		\alpha = \frac{2}{q} - 2,
	\end{equation}
we have $-1< \alpha \leq 0$ and
	\begin{eqnarray}
		&& 2-2q-\alpha q = 0, \label{q1} \\
		&& 2q(1+\alpha) = 2(2-q) > 0. \label{q2}
	\end{eqnarray}
Note that in \eqref{gx}, $t=\frac{(2+\alpha)K-\alpha\lambda_1^2}{(2+\alpha)K-\alpha\lambda_2^2}$. When $-2\leq\alpha\leq0$, $(2+\alpha)K-\alpha\lambda_i^2>0$ for $i=1,2$, so that $t>0$ is well defined.

Then computing the curly brackets in $B_1$, \eqref{B1}, we have the coefficient of $\lambda_1^2$ is
	\[4(1+\alpha-\frac 1q)+\alpha(\alpha-\frac 2 q)=-\frac{\alpha+2}{q}(2-2q-\alpha q)=0,\]
the coefficient of $\lambda_2^2$ is
\begin{equation*}
\alpha(\alpha-\frac{2}{q})=-2\alpha \geq 0,
  \end{equation*}	
and the coefficient of $K$ is
	\begin{equation}\label{eqK}
		2(\alpha^2-\frac{2\alpha}{q})-4(1-\frac 1q)-8\alpha^2-12\alpha +\frac{8\alpha}{q}=-2\alpha(2\alpha+3)\geq 0,
	\end{equation}
provided that $\alpha\in[-\frac32, 0]$.
Therefore, $B_1\geq0$, and similarly $B_2\geq0$. Hence, by \eqref{stop} and \eqref{q2} we obtain the contradiction
	\begin{equation}
		0\geq 2q(1+\alpha)u^{q-1} = 2(2-q) u^{q-1} > 0,
	\end{equation}
which implies \eqref{contra} and the convex body $\K$ must be a ball. \qed

\subsection{Case II, $p=-1$}
Define $G=\frac{H}{u}-2$. By differentiation we have
	\begin{eqnarray}
		\nabla_iG &=& \frac{\nabla_i H}{u} - \frac{H\nabla_iu}{u^2}, \\
		\nabla_{ij}G &=& \frac{\nabla_{ij}H}{u}-\frac{\nabla_iH\nabla_ju}{u^2}-\frac{\nabla_jH\nabla_iu}{u^2}-\frac{H\nabla_{ij}u}{u^2}+\frac{2H\nabla_iu\nabla_ju}{u^3}.
	\end{eqnarray}
So,
	\begin{equation}
	\begin{split}
		F^{ij}\nabla_{ij}G &= \frac{F^{ij}\nabla_{ij}H}{u}-\frac{2F^{ij}\nabla_iH\nabla_ju}{u^2}-\frac{HF^{ij}\nabla_{ij}u}{u^2}+\frac{2HF^{ij}\nabla_iu\nabla_ju}{u^3} \\
			&= \frac{F^{ij}\nabla_{ij}H}{u}-\frac{HF^{ij}\nabla_{ij}u}{u^2} - \frac{2F^{ij}\nabla_ju}{u}\left(\frac{\nabla_iH}{u}-\frac{H\nabla_iu}{u^2}\right) \\
			&=: R + \frac{2F^{ij}\nabla_ju}{u}\nabla_iG,
	\end{split}
	\end{equation}
where $R=\frac{F^{ij}\nabla_{ij}H}{u}-\frac{HF^{ij}\nabla_{ij}u}{u^2}$.

Now we compute $R$. From Lemmas 2.1 and 2.2, as $q=2$ we have
	\begin{eqnarray}
		F^{ij}\nabla_{ij}H &=& -F^{ij,rs}\nabla_mh_{ij}\nabla_mh_{rs} - KH^2 + 2u\langle \nabla H,X \rangle + 2uH +\frac{|\nabla K|^2}{2K}, \\
		F^{ij}\nabla_{ij}u &=& 2u\langle \nabla u,X \rangle + 2K -uKH.
	\end{eqnarray}
Hence,
	\begin{equation}
	\begin{split}
		R =&\  -\frac1uF^{ij,rs}\nabla_mh_{ij}\nabla_mh_{rs} - \frac{KH^2}{u} + 2\langle \nabla H,X \rangle - 2\frac{H}{u}\langle \nabla u,X \rangle \\
		&+2H+\frac{1}{2u}\frac{|\nabla K|^2}{K} -\frac{2HK}{u^2}+\frac{KH^2}{u} \\
		=&\ -\frac1uF^{ij,rs}\nabla_mh_{ij}\nabla_mh_{rs}+\frac{1}{2u}\frac{|\nabla K|^2}{K}+2u\langle \nabla G,X \rangle.
	\end{split}
	\end{equation}
	
Therefore, $G$ satisfies a uniformly elliptic equation
	\begin{equation}
	\begin{split}
		F^{ij}\nabla_{ij}G - \frac{2F^{ij}\nabla_ju}{u}\nabla_iG - 2u\langle \nabla G,X \rangle &= -\frac1uF^{ij,rs}\nabla_mh_{ij}\nabla_mh_{rs}+\frac{1}{2u}\frac{|\nabla K|^2}{K} \\
		&=: \frac1uS.
	\end{split}
	\end{equation}	

From (3.9) we have
	\begin{equation}
	\begin{split}
		S =&\ \frac{|\nabla K|^2}{2K} - F^{ij,rs}\nabla_mh_{ij}\nabla_mh_{rs} \\
		=&\ \frac{\lambda_2^2|\nabla_1h_{11}|^2+\lambda_1^2|\nabla_1h_{22}|^2}{2K} + \frac{\lambda_2^2|\nabla_2h_{11}|^2+\lambda_1^2|\nabla_2h_{22}|^2}{2K}\\
		& + \nabla_1h_{11}\nabla_1h_{22}+\nabla_2h_{11}\nabla_2h_{22} - 2(\nabla_1h_{11}\nabla_1h_{22}+\nabla_2h_{11}\nabla_2h_{22})\\
		& + 2|\nabla_mh_{12}|^2 \\
		\geq &\ \nabla_1h_{11}\nabla_1h_{22}+\nabla_2h_{11}\nabla_2h_{22} \\
				& + \nabla_1h_{11}\nabla_1h_{22}+\nabla_2h_{11}\nabla_2h_{22} - 2(\nabla_1h_{11}\nabla_1h_{22}+\nabla_2h_{11}\nabla_2h_{22})\\
		& + 2|\nabla_mh_{12}|^2 \\
		=&\ 2|\nabla_mh_{12}|^2  \geq0.
	\end{split}
	\end{equation}
So we know $G$ satisfies
	\begin{equation}
		F^{ij}\nabla_{ij}G - \frac{2F^{ij}\nabla_ju}{u}\nabla_iG - 2u\langle \nabla G,X \rangle \geq 0.
	\end{equation}

By the strong maximum principle \cite{GT}, $G$ is a constant on $\partial\K$. This implies that
	$$Q=(H^2-4K)K^{-1}$$
is constant. Hence, from \eqref{stop}, \eqref{eqK}, and computations in the previous case,
	$$ 0\equiv F^{ij}\nabla_i\nabla_j(\log Q) = \tilde B_1|\nabla_1h_{11}|^2+\tilde B_2|\nabla_2h_{22}|^2, $$
and the coefficients $\tilde B_1, \tilde B_2>0$. So, $\nabla_1h_{11}=\nabla_2h_{22}=0$, and thus by \eqref{gx}, $\nabla h_{ij}\equiv0$.
Therefore, $K$ is constant and $\K$ must be a ball.  \qed

In fact, using the above methods in Subsections 3.1 and 3.2, we can also obtain the following stability result.
\begin{corollary}
For $p\in [-1, 0]$, if
$$
\frac{u(\nu)^{1-p}}{K(\nu)} = f > 0,
$$
then the $C^4$ smooth convex body $\K$ is almost  a ball  in the sense of
$$
(\lambda_1-\lambda_2)^2\leq C(|\nabla f|,|\nabla^2 f|),
$$
where $\lambda_1, \lambda_2$ are principal curvatures of $\partial \K$.
The constant $C(|\nabla f|,|\nabla^2 f|)=0$ provided that $f$ is a positive constant.
\end{corollary}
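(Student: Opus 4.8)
The plan is to repeat the maximum principle argument of Section~3 with the variable right-hand side, carrying the derivatives of $f$ as error terms. Write $q=1-p\in[1,2]$, so that the hypothesis $u^{1-p}/K=f$ is the equation $F(h_{ij})=K=u^{q}f^{-1}$. First I would redo the one- and two-fold differentiations that gave \eqref{w1}--\eqref{w2}: each identity of Section~2 is reproduced verbatim, \emph{plus} extra terms that are linear in $\nabla f$, in $\nabla^{2}f$, and in the product $\nabla u\cdot\nabla f$, with coefficients that are smooth functions of $u,f,H,K$. Since $\K$ is a \emph{fixed} $C^{4}$ convex body with strictly positive curvature, these coefficients together with $u_{\min},u_{\max}$, $\sup_{\partial\K}(H,K,|\nabla H|,|\nabla K|)$ and the positive lower bound $\lambda_{\min}$ of the principal curvatures are all finite; let $C_{0}$ denote any constant depending only on them. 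Using $\nabla_{m}K=\lambda_{2}\nabla_{m}h_{11}+\lambda_{1}\nabla_{m}h_{22}$ from \eqref{3008} and a Cauchy--Schwarz split of the $\nabla u\cdot\nabla f$ contributions, the upshot is that \eqref{1qm} and \eqref{t1} acquire error terms $\mathcal{E}_{H},\mathcal{E}_{K}$ with $|\mathcal{E}_{H}|+|\mathcal{E}_{K}|\le C_{0}\,\omega+\varepsilon\,|\nabla h|^{2}$, where $\omega:=|\nabla f|+|\nabla^{2}f|$ and $\varepsilon>0$ is at our disposal; no such term carries a negative power of $\lambda_{1}-\lambda_{2}$.

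Next I would apply the maximum principle to $Q=(H^{2}-4K)K^{\alpha}$ with the same choice $\alpha=\tfrac2q-2\in(-1,0]$ as in Subsection~3.1. At a point $\hat X$ realizing $\max_{\partial\K}Q$ with $Q(\hat X)>0$ one has $\lambda_{1}\ne\lambda_{2}$, so $t$ in \eqref{gx} is well defined and bounded away from $0$ and $\infty$, and exactly as in Subsection~3.1 one gets $B_{1},B_{2}\ge 16(1+\alpha)^{2}K^{2}>0$. The stopover inequality \eqref{stop} now reads $0\ge 2(2-q)u^{q-1}f^{-1}+(\text{the }B_{1},B_{2}\text{ terms})+\mathcal{E}$, where the $B_{j}$ terms and $\mathcal{E}$ alike inherit from \eqref{s} the coefficients $\tfrac{2H}{H^{2}-4K}$, $\tfrac{4}{H^{2}-4K}$, $\tfrac{\alpha}{K}$; since $H^{2}-4K=(\lambda_{1}-\lambda_{2})^{2}=Q\,K^{-\alpha}$, the $B_{j}$ terms dominate $\tfrac{c_{1}}{Q}\big(|\nabla_{1}h_{11}|^{2}+|\nabla_{2}h_{22}|^{2}\big)\ge \tfrac{c_{1}'}{Q}|\nabla h|^{2}$ for positive geometric constants $c_{1},c_{1}'$ (using \eqref{gx} and Codazzi), while $|\mathcal{E}|\le \tfrac{C_{0}}{Q}\big(\omega+\varepsilon|\nabla h|^{2}\big)+C_{0}\,\omega$. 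Taking $\varepsilon<c_{1}'$ and absorbing the $\tfrac{\varepsilon}{Q}|\nabla h|^{2}$ term into the $B_{j}$ contributions leaves $0\ge 2(2-q)u^{q-1}f^{-1}(\hat X)-C_{0}\,\omega\big(1+Q(\hat X)^{-1}\big)$.

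For $p\in(-1,0]$ (so $2-q>0$) this forces $\delta\,Q(\hat X)\le C_{0}\,\omega\,\big(1+Q(\hat X)\big)$ with $\delta:=2(2-q)\inf_{\partial\K}(u^{q-1}f^{-1})>0$, hence $\max_{\partial\K}Q=Q(\hat X)\le \tfrac{C_{0}\,\omega}{\,\delta-C_{0}\,\omega\,}$ whenever $C_{0}\,\omega<\delta$, and therefore $(\lambda_{1}-\lambda_{2})^{2}=Q\,K^{-\alpha}\le C_{0}\max_{\partial\K}Q$, which tends to $0$ as $\omega\to0$; when $f$ is a positive constant, $\omega=0$ and $\mathcal{E}\equiv0$, so the inequality becomes the contradiction $0\ge\delta>0$ of Subsection~3.1 and $Q\equiv0$, which is the assertion $C=0$. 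For the endpoint $p=-1$ ($q=2$, $\alpha=-1$), where the structural term $2(2-q)u^{q-1}f^{-1}$ degenerates, I would instead run the argument of Subsection~3.2: the function $G=H/u-2$ now satisfies a \emph{uniformly elliptic} equation $F^{ij}\nabla_{ij}G-\tfrac{2F^{ij}\nabla_{j}u}{u}\nabla_{i}G-2u\langle\nabla G,X\rangle=\tfrac1u(S+\mathcal{E}_{G})$ with $S\ge c\,|\nabla h|^{2}\ge0$ (from the computation in Subsection~3.2) and $|\mathcal{E}_{G}|\le C_{0}\,\omega$; testing at the extrema of $G$ on the closed surface $\partial\K$ and applying the maximum principle together with standard elliptic estimates for the operator on the left converts the one-sided sign of $S$ into a bound $\mathrm{osc}_{\partial\K}(H/u)\le C_{0}\,\omega$, whence $(\lambda_{1}-\lambda_{2})^{2}=\big(f(H/u)^{2}-4\big)K\le C_{0}\,\omega$ as well.

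The main obstacle is the bookkeeping in the second step: one must verify that, after substituting $K=u^{q}f^{-1}$, \emph{every} error term appears multiplied by at most one factor $(\lambda_{1}-\lambda_{2})^{-2}$ — so that it is dominated by $\omega\,(1+Q^{-1})$ and not by a higher negative power of $Q$ — and that the mixed $\nabla h\cdot\nabla f$ terms are genuinely absorbed by the now strictly positive, uniformly-bounded-below coefficients $B_{1},B_{2}$ rather than merely dominated pointwise by a blowing-up constant. A secondary delicate point is the endpoint $p=-1$, where the one-sided information $S\ge0$ must be upgraded, via the maximum principle on the compact manifold $\partial\K$, into a two-sided oscillation estimate for $G$; this uses that on a fixed $C^{4}$ body with positive curvature the operator is uniformly elliptic with bounded coefficients, so that the geometric constants $\delta,c_{1}',c$ above are finite — which is precisely why $C$ may be taken to depend only on those a priori bounds for $\K$ together with $|\nabla f|$ and $|\nabla^{2}f|$, and tends to $0$ with the latter.
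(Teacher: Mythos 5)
The paper gives no actual proof of this corollary --- it only says ``using the above methods in Subsections 3.1 and 3.2'' --- so the relevant question is whether your fleshing-out of that method is sound. For $p\in(-1,0]$ it essentially is: the critical-point identities \eqref{g}, \eqref{gx} and the cancellation \eqref{mjh} are consequences of $\nabla\log Q=0$ alone and survive a variable $f$ unchanged, the new terms enter only through \eqref{1qm} and \eqref{t1} and are of the size you claim, and the zeroth-order term $2(2-q)u^{q-1}f^{-1}$ stays strictly positive, so the absorption scheme closes. One caveat: the lower bound $B_1,B_2\ge 16(1+\alpha)^2K^2$ you invoke degenerates as $q\to 2$ ($\alpha\to-1$); to get a coercivity constant uniform on $[-1,0]$ you must also use the terms $-2\alpha H^2\lambda_2^2$ and $-2\alpha(2\alpha+3)H^2K$ visible in \eqref{B1}--\eqref{B2}, which are positive exactly when $(1+\alpha)^2$ is small. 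Also note that your $C_0$ and $\delta$ depend on a priori bounds for the fixed body $\K$ (diameter, inradius, curvature pinching), not only on $f$; the corollary's notation $C(|\nabla f|,|\nabla^2 f|)$ glosses over this, but so does the paper.

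The genuine gap is the endpoint $p=-1$. There the differential inequality you obtain for $G=H/u-2$ is one-sided: $F^{ij}\nabla_{ij}G-\frac{2F^{ij}\nabla_ju}{u}\nabla_iG-2u\langle\nabla G,X\rangle=\frac1u(S+\mathcal{E}_G)\ge -C_0\,\omega$, but there is \emph{no} matching upper bound, because $S$ contains uncontrolled squares of $\nabla h$ (and note $S\ge 2|\nabla_mh_{12}|^2$ only, not $c|\nabla h|^2$ as you assert --- the diagonal derivatives cancel in the paper's computation). ``Testing at the extrema'' of $G$ only yields pointwise information at those two points, and the standard tools that convert data about $LG$ into an oscillation bound on a closed surface (ABP, Krylov--Safonov, local maximum principle) all require control of the \emph{positive} part of $LG$, i.e.\ of $S$, which you do not have; integrating the equation gives only an $L^1$ bound on $LG$, too weak to conclude $\mathrm{osc}(G)\le C_0\,\omega$ in dimension two. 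The strong maximum principle gives rigidity only in the exact case $\omega=0$. Nor can you fall back on the $Q$-computation at $p=-1$: with $\alpha=-1$ both zeroth-order terms in \eqref{stop} vanish, and the inequality at $\hat X$ only yields $|\nabla h|^2(\hat X)\le C_0\,\omega\,(1+Q(\hat X))$, which bounds nothing. So the stability statement for the endpoint $p=-1$ is not established by your argument; you need an additional idea there (e.g.\ an integral/entropy estimate exploiting the sign and structure of $S$, or a compactness--contradiction argument within a controlled family of bodies).
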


\subsection{Case III, $p\in (0,1)$}
In this case, one has $0< q=1-p < 1$.
Choosing
	\begin{equation}\label{qt2}
		\alpha = \frac{1}{q} - 1,
	\end{equation}
we have $\alpha > 0$ and
	\begin{eqnarray}
		&& 2-2q-\alpha q = 1-q > 0, \label{2q1} \\
		&& 2q(1+\alpha) = 2. \label{2q2}
	\end{eqnarray}
By direct computing, we have $B_1, B_2$ in \eqref{stop} equal to
	\begin{equation*}
		B_1=B_2=\frac{16}{q^2}K^2+(1-\frac{1}{q^2})H^4.
	\end{equation*}

Without loss of generality, we assume that
	\begin{equation}\label{pinch}
		\lambda_1>\lambda_2=\beta\lambda_1\quad\mbox{ for some }\beta\in(0,1).
	\end{equation}		
Then $H=(1+\beta)\lambda_1$, $K=\beta\lambda_1^2$, and
	\begin{equation*}
	\begin{split}
		B_1=B_2 &=\left[16\beta^2+(q^2-1)(1+\beta)^4\right]\frac{\lambda_1^4}{q^2} \\
				&=: h(\beta)\frac{\lambda_1^4}{q^2}.
	\end{split}
	\end{equation*}
Let $\tau=\sqrt{1-q^2}$, then $h(\beta)=16\beta^2-\tau^2(1+\beta)^4$.
Straightforward computations yield that $h(\beta)\geq0$, if $\beta\geq\beta(q)$, where the pinching constant $\beta(q)$ is given by
	\begin{equation}\label{pinch2}
		\beta(q)=2\left(\frac{1-\sqrt{1-\sqrt{1-q^2}}}{\sqrt{1-q^2}}\right)-1.
	\end{equation}
	
Now, let's verify that $t$ in \eqref{gx} is well defined, as well the denominators in \eqref{stop} are nonzero.
Since $\alpha>0$ and $\lambda_1>\lambda_2$, it suffices to show $(2+\alpha)K>\alpha\lambda_1^2$.
From \eqref{qt2} and \eqref{pinch}, this is consistent only if
	\begin{equation}\label{ver}
		\beta > \beta_t(q):=\frac{1-q}{1+q}.
	\end{equation}
By direct computation, one can see $\beta(q)>\beta_t(q)$ when $q\in(0,1)$. Hence, $t>0$ is well defined when $\beta\geq\beta(q)$.

Therefore, when $\beta\geq\beta(q)$, from \eqref{stop} we have the contradiction
	\begin{equation}
		0 \geq (1-q)KH+2u^{q-1} >0,
	\end{equation}
which then implies \eqref{contra} and the convex body $\K$ must be a ball. \qed

\section*{Acknowledgments}
We would like to thank Ben Andrews for discussions on the corresponding problem of powered Gauss curvature flows.

\bibliographystyle{amsplain}

\end{document}